\documentclass[12pt]{article}
\usepackage[top=2.54cm, bottom=2.54cm, left=2.80cm, right=2.80cm]{geometry}
\usepackage[tbtags]{amsmath}
\usepackage{amssymb}
\usepackage{amsthm}
\usepackage{fancyhdr}
\usepackage{latexsym}
\usepackage{mathrsfs}
\usepackage{wasysym}
\usepackage{fancyhdr}
\usepackage{float}
\usepackage{graphicx}
\usepackage[numbers,sort&compress]{natbib}
\usepackage{setspace}
\allowdisplaybreaks[4]

\newtheorem{theorem}{\bf Theorem}[section]
\newtheorem{lemma}[theorem]{Lemma}
\newtheorem{Def}[theorem]{Definition}

\newtheorem{corollary}[theorem]{Corollary}

\begin{document}
\begin{spacing}{1.1}
\title{New results on $k$-independence of hypergraphs}
\author{Lei Zhang$^a$\thanks {Email addresses: shuxuezhanglei@163.com(L. Zhang).anchang@fzu.edu.cn(An Chang)}, \, An Chang$^a$\\
$^a$Center for Discrete Mathematics and Theoretical Computer Science,\\
 Fuzhou University, Fuzhou, Fujian, P.R. China}
\date{}
\maketitle
\begin{abstract}
Let $H=(V,E)$ be an $s$-uniform hypergraph of order $n$ and $k\geq 0$ be an integer. A $k$-independent set $S\subseteq H$ is a set of vertices such that the maximum degree in the hypergraph induced by $S$ is at most $k$. Denoted by $\alpha_k(H)$ the maximum cardinality of the $k$-independent set of $H$. In this paper, we first give a lower bound of $\alpha_k(H)$ by the maximum degree of $H$. Furthermore, we prove that $\alpha_k(H)\geq \frac{s(k+1)n}{2d+s(k+1)}$ where $d$ is average degree of $H$, and $k\geq 0$ is an integer.
\end{abstract}

{\bf AMS}\,: 05C65; 05C69

{\bf Keywords} $s$-uniform hypergraphs, $k$-independent set

\section{Introduction}

\noindent

Let $G=(V,E)$ be a graph on $n$ vertices and $k\geq 0$ be an integer. A $k$-independent set $S\subseteq V$ is a set of vertices such that the maximum degree in the graph induced by $S$ is at most $k$. With $\alpha_k(G)$ we denote the maximum cardinality of a $k$-independent set of $G$ and it is called the $k$-independence number of $G$. In particular, $\alpha_0(G)=\alpha(G)$ is the usual independence number of $G$.

In recent years, as a generalization of the independence number of graphs, the $k$-independence number of graphs attracted more attention of researchers. The first result on bounding $k$-independence number was given by Y. Caro and Z. Tuza in \cite{Caro1991}, where it was shown that if the average degree $d\geq k+1$ then $\alpha_k(G)\geq \frac{k+2}{2(d+1)}n$. Furthermore, Caro and Hansberg \cite{Caro2013} showed that if the average degree $d\geq 0$ then $\alpha_k(G)\geq \frac{(k+1)n}{\lceil d\rceil+k+1}n$. Recently, Shimon Kogan prove that $\alpha_k(G)\geq \frac{(k+1)n}{d+k+1}n$ for general $k$ in \cite{Kogan2017}, and improve the previous best bound obtained by Caro and Hansberg.

A hypergraph $H$ is a pair $(V,E)$, where $E\in P(V)$ and $P(V)$ stands for the power set of $V$. The elements of $V=V(G)$, labeled as $[n]=\{1,\cdots,n\}$, are referred to as vertices and the elements of $E=E(H)$ are called edges. A hypergraph $H$ is said to be $s$-uniform for an integer $s\geq2$ if, for all $e\in E(H)$, $|e|=s$. Obviously, a 2-uniform hypergraph is just an ordinary graph. Motivated by above study on the $k$-independence number of graphs, researchers pay much attention to the independence number of hypergraphs. Some relevant results are presented as following.

In 1991, Yair Caro and Zsolt Tuza firstly estimate the maximum cardinality $\alpha(H)$ of an independent set in an $s$-uniform hypergraph $H$.

\begin{theorem}[\cite{Caro1991}]
Let $H=(V,E)$ be an $s$-uniform hypergraph with $s\geq 2$. Then
\[\alpha(H)\geq \sum \limits_{v\in V}f(d(v)),\]
where $d(v)$ is the degree of $v$, i.e., the number of edges containing $v$, and the function $f$ is given by
\[f(d):=\prod \limits_{i=1}^{d} (1-\frac{1}{i(s-1)+1}).\]
\end{theorem}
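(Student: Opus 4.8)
The plan is to prove the bound by the probabilistic ``random weighting'' method, which for $s=2$ recovers the classical Caro--Wei argument. First I would assign to each vertex $v\in V$ an independent random variable $x_v$, uniformly distributed on $[0,1]$; since the $x_v$ are continuous, all weights are almost surely distinct. Using these weights I single out the set
\[
S=\{\,v\in V : x_v \text{ is not the largest weight in any edge containing } v\,\}.
\]
The first step is to check that $S$ is independent: for every edge $e$ the vertex of $e$ carrying the maximum weight is excluded from $S$ by definition, so no edge can be wholly contained in $S$. Hence $|S|\le \alpha(H)$ always holds, and it suffices to bound $\mathbb{E}[|S|]$ from below.

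Next I would estimate $\Pr[v\in S]$ for a fixed vertex $v$ of degree $d=d(v)$. Let $e_1,\dots,e_d$ be the edges through $v$, and let $A_j$ be the event that $v$ carries the maximum weight in $e_j$. Conditioning on $x_v=t$, the event $A_j$ says that the other $s-1$ vertices of $e_j$ all have weight below $t$, so $\Pr[A_j\mid x_v=t]=t^{\,s-1}$ regardless of how the edges overlap. The complementary events $\overline{A_j}$ (``some other vertex of $e_j$ exceeds $t$'') are increasing in the weights $(x_u)_{u\neq v}$, so by the FKG (Harris) inequality they are positively correlated:
\[
\Pr\!\Big[\,\bigcap_{j=1}^{d}\overline{A_j}\;\Big|\;x_v=t\Big]\ \ge\ \prod_{j=1}^{d}\Pr[\overline{A_j}\mid x_v=t]=(1-t^{\,s-1})^{d}.
\]
Integrating over $t\in[0,1]$ then yields $\Pr[v\in S]\ge \int_0^1 (1-t^{\,s-1})^{d}\,dt$.

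The remaining step is the identity $\int_0^1 (1-t^{\,s-1})^{d}\,dt=f(d)$. I would establish it by the substitution $u=t^{\,s-1}$, turning the integral into the Beta integral $\tfrac{1}{s-1}B(\tfrac{1}{s-1},d+1)$, and then simplifying the ratio of Gamma functions into the telescoping product $\prod_{i=1}^{d}\frac{i(s-1)}{i(s-1)+1}=f(d)$ (one can alternatively verify the identity directly by induction on $d$ using integration by parts). Combining the pieces, linearity of expectation gives
\[
\alpha(H)\ \ge\ \mathbb{E}[|S|]=\sum_{v\in V}\Pr[v\in S]\ \ge\ \sum_{v\in V} f(d(v)),
\]
as required. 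The main obstacle is the dependence among the events $A_j$ when several edges through $v$ share additional vertices: this is precisely where a naive ``independent edges'' computation would be only heuristic, and the FKG inequality is what makes the product lower bound rigorous in full generality. The only other technical point is the Gamma-function bookkeeping needed to match the integral with the prescribed product $f(d)$, which is routine.
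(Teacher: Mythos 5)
Your proof is correct; note, however, that the paper itself never proves this statement --- it is quoted as background (Theorem 1.1) directly from Caro and Tuza \cite{Caro1991} --- so the only meaningful comparison is with the original argument, which is a deterministic induction built on the recursion $f(0)=1$, $\bigl(d(s-1)+1\bigr)f(d)=d(s-1)\,f(d-1)$, not anything probabilistic. Your route (uniform random weights, the set $S$ of vertices that are nowhere the maximum of an edge through them, Harris/FKG, then a Beta integral) is sound at every step: $S$ is independent because each edge excludes its maximum-weight vertex; conditional on $x_v=t$ the events $\overline{A_j}$ are increasing functions of the independent weights $(x_u)_{u\neq v}$, so Harris's inequality legitimately yields $\Pr\bigl[\bigcap_j \overline{A_j}\mid x_v=t\bigr]\geq (1-t^{s-1})^{d}$; and integration by parts gives $\bigl(1+d(s-1)\bigr)I(d)=d(s-1)\,I(d-1)$ for $I(d)=\int_0^1(1-t^{s-1})^{d}\,dt$, which is exactly the recursion defining $f$, hence $I(d)=f(d)$. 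You also correctly isolated the one genuine difficulty --- edges through $v$ that share further vertices, where a naive independence computation fails --- and FKG is the right repair; this is essentially the device used in the later probabilistic treatment of the Caro--Tuza bound by Csaba, Plick and Shokoufandeh \cite{Thomas2012}. As for what each approach buys: the original induction is elementary, requires no correlation inequalities, and extends naturally to the $k$-independence refinement $f_{k,s}$ quoted as Theorem 1.4 of the paper, while your probabilistic argument is shorter and more transparent, and it directly motivates the weighting/permutation schemes behind subsequent improvements of such bounds.
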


In 1999, Torsten Thiele gives a lower bound of the independence number for general hypergraphs.

\begin{theorem}[\cite{Thiele1999}]
Let $H=(V;E)$ be a hypergraph of rank $r$. Then
\[\alpha(H)\geq \sum \limits_{v\in V}f_r(d(v)),\]
where $d(v)$ is the degree of $v$, i.e., the number of edges containing $v$, and the function $f_r: N^{r}_0\rightarrow \mathcal{R}$ is given by
\[f_r(d)=\sum \limits_{i\in N^{r}_0} \left[\prod \binom{d_m}{i_m}\right]\frac{(-1)^{\sum i_m}}{\sum(m-1)\cdot i_m+1}.\]
\end{theorem}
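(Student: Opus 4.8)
The plan is to prove Thiele's bound by the probabilistic method with a single round of independent continuous weights, generalizing the random-weight (Caro--Wei type) argument behind the $s$-uniform bound of Caro and Tuza. Assign to each vertex $v$ an independent weight $x_v$ drawn uniformly from $[0,1]$, and set
\[I=\{\,v\in V : v \text{ is not the heaviest vertex of any edge containing } v\,\}.\]
Since the weights are almost surely distinct, every edge $e$ has a unique heaviest vertex, and that vertex is excluded from $I$; hence no edge is contained in $I$, so $I$ is independent with probability $1$. By linearity of expectation it then suffices to lower bound $\Pr[v\in I]$ for each $v$ and sum, using $\alpha(H)\geq\mathbb{E}[|I|]$.

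To evaluate $\Pr[v\in I]$, I would condition on $x_v=x$ and write $d_m(v)$ for the number of edges of size $m$ through $v$, so that $d(v)=(d_1(v),\dots,d_r(v))\in N_0^{r}$. For a single edge $e$ of size $m$ containing $v$, the event that $v$ is not the heaviest vertex of $e$ is the event that at least one of the $m-1$ other vertices of $e$ gets a weight exceeding $x$, of probability $1-x^{\,m-1}$. If the edges through $v$ met only at $v$, these events would be independent across edges, giving $\Pr[v\in I\mid x_v=x]=\prod_{m}(1-x^{\,m-1})^{d_m(v)}$; integrating over $x$ and expanding each factor by the binomial theorem,
\[\int_0^1\prod_{m}(1-x^{\,m-1})^{d_m(v)}\,dx=\sum_{i\in N_0^{r}}\Big[\prod_m\binom{d_m(v)}{i_m}\Big]\frac{(-1)^{\sum i_m}}{\sum(m-1)i_m+1}=f_r(d(v)),\]
since $\int_0^1 x^{\sum(m-1)i_m}\,dx=1/(\sum(m-1)i_m+1)$ and $\prod_m(-1)^{i_m}=(-1)^{\sum i_m}$. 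This recovers exactly the stated function, and specializing to a single edge size $s$ reproduces the Caro--Tuza expression $f$.

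The main obstacle is that in a general hypergraph the edges through $v$ overlap in more than $v$, so the per-edge events are not independent and the displayed product is not literally an equality. The key observation is that, conditioned on $x_v=x$, each event ``some other vertex of $e$ exceeds $x$'' is an increasing event in the weights $\{x_w:w\neq v\}$; by the Harris--FKG correlation inequality, increasing events under a product measure are positively associated, so the probability of their intersection is at least the product of their probabilities. Hence $\Pr[v\in I\mid x_v=x]\geq\prod_m(1-x^{\,m-1})^{d_m(v)}$, and integrating preserves the inequality, yielding $\Pr[v\in I]\geq f_r(d(v))$. Summing over $v$ then gives $\alpha(H)\geq\mathbb{E}[|I|]\geq\sum_{v\in V}f_r(d(v))$. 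The only point requiring genuine care is this clean application of the correlation inequality, which one may alternatively verify directly by checking term-by-term that the inclusion--exclusion over unions of edges only improves the disjoint-edge estimate.
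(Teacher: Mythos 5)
Your proposal cannot be matched against an in-paper argument, because the paper never proves this statement: it is quoted in the introduction as background, with only the citation to Thiele, so the comparison can only be made on the merits of your argument itself. Judged that way, your proof is correct and complete. The set $I$ of vertices that are heaviest in no edge containing them omits the almost-surely-unique heaviest vertex of every edge, hence is independent with probability $1$; your reading of $d(v)$ as the degree vector $(d_1(v),\dots,d_r(v))\in N_0^r$ is the right interpretation of the statement (which, as printed, conflates the scalar degree with the vector that $f_r$ actually takes as input); the binomial expansion and term-by-term integration giving $\int_0^1\prod_m (1-x^{m-1})^{d_m}\,dx=f_r(d)$ is exact; and the crux of the matter, namely that edges through $v$ overlap so the per-edge events are dependent, is legitimately resolved by the Harris--FKG inequality: conditioned on $x_v=x$, each event ``some vertex of $e\setminus\{v\}$ has weight exceeding $x$'' is an increasing event in the i.i.d.\ weights $\{x_w : w\neq v\}$, and the probability of an intersection of finitely many increasing events under a product measure is at least the product of their probabilities. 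Integrating over $x$ and summing over $v$ then yields $\alpha(H)\geq \mathbb{E}[|I|]\geq \sum_{v\in V} f_r(d(v))$, and your observation that a single edge size recovers the Caro--Tuza expression also checks out (both equal $\Gamma(1+\tfrac{1}{s-1})\,d!\,/\,\Gamma(d+1+\tfrac{1}{s-1})$). Two minor remarks: edges of size $1$ are handled automatically, since the factor $(1-x^0)^{d_1}=0$ for $d_1\geq 1$ correctly records that a vertex in a singleton edge lies in no independent set; and your closing claim that FKG could be replaced by a direct ``term-by-term'' comparison of the inclusion--exclusion expansion is doubtful as stated (those terms alternate in sign, so no termwise domination is immediate), but nothing in your proof relies on it.
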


In 2012, B\'{e}la Csaba, Thomas A. Plick, Ali Shokoufandeh give another lower bound of the independence number for an $s$-uniform hypergraph, and improve the Spencer's result\cite{Spencer1972}.

\begin{theorem}[\cite{Thomas2012}]
Let $H$ be an $s$-uniform hypergraph for $s\geq 3$, and let $d_1,d_2,\cdots,d_n$ be the degrees of its vertices, with $n>0$. Then
\[\alpha(H)\geq e^{-\frac{\gamma}{s-1}}\cdot \sum \limits_{i=1}^n \frac{1}{(d_i+1)^{\frac{1}{s-1}}}.\]
where $\gamma=0.5772\ldots$ is the Euler-Mascheroni constant.
\end{theorem}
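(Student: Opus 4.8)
The plan is to prove the bound by the probabilistic ``random-priority'' method, exhibiting a random independent set whose expected size already reaches the claimed quantity. First I would assign to each vertex $v$ an independent uniform random value $x_v\in[0,1]$, and define
\[
I=\{v\in V:\ \text{for every edge } e\ni v \text{ there is } u\in e \text{ with } x_u>x_v\},
\]
i.e. $v\in I$ exactly when $v$ is not the maximum (under $x$) of any edge containing it. The set $I$ is automatically independent: if some edge $e$ were contained in $I$, its $x$-largest vertex $w$ would be the maximum of $e$ and hence excluded from $I$, contradicting $w\in e\subseteq I$. Since $I$ is always an independent set, $\alpha(H)\ge\mathbb{E}[|I|]=\sum_{v}\Pr[v\in I]$, and it remains to bound each $\Pr[v\in I]$ from below.

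Next I would compute $\Pr[v\in I]$ by conditioning on $x_v=t$. For a fixed edge $e\ni v$, the event $A_e$ that $v$ is not the maximum of $e$ is the event that at least one of the other $s-1$ vertices of $e$ exceeds $t$, so $\Pr[A_e\mid x_v=t]=1-t^{s-1}$. The crucial point is that $\{v\in I\}$ is the intersection $\bigcap_{e\ni v}A_e$, and each $A_e$ is an \emph{increasing} event in the remaining variables $(x_u)_{u\ne v}$. By the Harris--FKG inequality these increasing events are positively correlated, so $\Pr[v\in I\mid x_v=t]\ge\prod_{e\ni v}\Pr[A_e\mid x_v=t]=(1-t^{s-1})^{d(v)}$. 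Integrating over $t$ gives $\Pr[v\in I]\ge\int_0^1(1-t^{s-1})^{d(v)}\,dt$.

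The substitution $u=t^{s-1}$ turns this into a Beta integral: writing $a=\tfrac{1}{s-1}$ and $d=d(v)$,
\[
\int_0^1(1-t^{s-1})^{d}\,dt=\frac{1}{s-1}B\!\left(a,\,d+1\right)=\Gamma(1+a)\,\frac{\Gamma(d+1)}{\Gamma(d+1+a)}.
\]
To extract the stated closed form I would invoke two standard Gamma estimates. First, the Weierstrass product $\Gamma(1+a)=e^{-\gamma a}\prod_{n\ge1}\frac{e^{a/n}}{1+a/n}$ together with $e^{x}\ge1+x$ gives $\Gamma(1+a)\ge e^{-\gamma a}$, which is precisely where the constant $e^{-\gamma/(s-1)}$ enters. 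Second, the log-convexity of $\Gamma$ applied to $d+1+a=(1-a)(d+1)+a(d+2)$ yields $\Gamma(d+1+a)\le(d+1)^{a}\,\Gamma(d+1)$, hence $\frac{\Gamma(d+1)}{\Gamma(d+1+a)}\ge(d+1)^{-a}$. Combining these, $\Pr[v\in I]\ge e^{-\gamma a}(d(v)+1)^{-a}$, and summing over $v\in V$ finishes the proof.

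The main obstacle I anticipate is the overlap between edges: the factorization $\prod_{e\ni v}(1-t^{s-1})$ is exact only when the edges through $v$ are otherwise vertex-disjoint, whereas in general the events $A_e$ share variables. Resolving this cleanly is exactly the role of the Harris--FKG monotonicity argument, which guarantees that overlaps can only increase $\Pr[v\in I]$, so the vertex-disjoint configuration is the worst case and the product lower bound is legitimate. Once that correlation inequality is in place, the remaining work---the Beta-integral evaluation and the two Gamma inequalities---is routine.
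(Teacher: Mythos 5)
Your proof is correct. Note, however, that the paper itself gives no proof of this statement: it is Theorem 1.3, quoted verbatim from \cite{Thomas2012} as background in the introduction, so the only comparison available is with the cited source. There the bound is derived from the Caro--Tuza theorem (Theorem 1.1 of this paper), i.e.\ from $\alpha(H)\geq\sum_v\prod_{i=1}^{d(v)}\bigl(1-\frac{1}{i(s-1)+1}\bigr)$, by estimating that product with essentially the same two Gamma-function inequalities you use ($\Gamma(1+a)\geq e^{-\gamma a}$ via the Weierstrass product, and $\Gamma(d+1+a)\leq(d+1)^a\Gamma(d+1)$ via log-convexity). Your route replaces the combinatorial ingredient by a self-contained random-priority argument: each step checks out --- $I$ is independent even in the measure-zero event of ties, since any maximizer of an edge inside $I$ would be excluded; conditionally on $x_v=t$ the remaining coordinates form an independent product measure, each $A_e$ is increasing in them, and Harris--FKG legitimately yields $\Pr[v\in I\mid x_v=t]\geq(1-t^{s-1})^{d(v)}$; and the Beta-integral evaluation and Gamma estimates are standard. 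In fact, with $a=\frac{1}{s-1}$ one has the identity
\[
\int_0^1\bigl(1-t^{s-1}\bigr)^{d}\,dt=\Gamma(1+a)\,\frac{\Gamma(d+1)}{\Gamma(d+1+a)}=\prod_{i=1}^{d}\frac{i}{i+a}=\prod_{i=1}^{d}\Bigl(1-\frac{1}{i(s-1)+1}\Bigr),
\]
so your lower bound on $\Pr[v\in I]$ is exactly the Caro--Tuza function $f(d)$, and your probabilistic step re-proves Theorem 1.1 (the $k=0$ case) along the way. What your approach buys is a one-shot, fully self-contained proof in which overlapping edges are handled transparently by the correlation inequality (the vertex-disjoint star being the worst case); what the route of \cite{Thomas2012} buys is elementarity, since invoking the known Caro--Tuza bound avoids any appeal to FKG. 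Your argument also works for $s=2$, so the restriction $s\geq 3$ in the statement is not needed for your proof.
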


A generalization for the $k$-independence number of $s$-uniform hypergraphs was obtained by Caro and Tuza\cite{Caro1991}, which improved earlier results of Favaron \cite{Favaron1988}. (Here, ``1-independent" means ``independent".)

\begin{theorem}[\cite{Caro1991}]
For every natural number $k$ and every $s$-uniform hypergraph $H$ with vertex set $V$,
\[\alpha_k(H)\geq \sum \limits_{x\in V} f_{k,s}(d(x)).\]
where $d(x)$ is the degree of $x$, i.e., the number of edges containing $x$, and the function $f_{k,s}(i)$ is given by
\[
   f_{k,s}(i)=\left\{
      \begin{array}{ll}
      1-\frac{i}{ks}, \ \ \ \ \ \ \ \ \ \ \ \ \ \ \ \ \ \ \ \ \ 0\leq i\leq k;\\
      \prod \limits_{j=1}^{i-k+1} (1-\frac{1}{j(s-1)+1}), \ \ \ \ \ \ \ i\geq k,
   \end {array}
   \right.
\]
for all nonnegative integers $k\geq 1,s\geq 2$, and $i\geq 0$.
\end{theorem}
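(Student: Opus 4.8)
The plan is to prove the bound by the probabilistic ``random ordering'' method, using linearity of expectation to reduce the global statement to a per-vertex survival probability, and to build directly on the ordinary ($k=1$) case recorded in the first $s$-uniform theorem above. Concretely, I would draw a uniformly random permutation (equivalently, i.i.d. uniform labels $x_v\in[0,1]$) of $V$, use it to single out a random subset $S\subseteq V$ that is guaranteed to be $k$-independent, and then show $\mathbb{E}|S|=\sum_v \Pr[v\in S]\ge \sum_v f_{k,s}(d(v))$. The role of the parameter $k$ is to grant each vertex a ``budget'': a vertex may remain in $S$ even if it is the last (highest-label) vertex of up to $k-1$ of the edges through it. This is exactly what produces the shift $f_{k,s}(i)=f_{1,s}(i-k+1)$ visible for $i\ge k$ in the statement, since the first $k-1$ incident edges are, in effect, ``free'' and only the remaining $i-k+1$ edges behave like genuine independence constraints.

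For the per-vertex computation I would first treat the generic case in which the $d=d(v)$ edges through $v$ meet only at $v$, so that they involve $1+d(s-1)$ distinct vertices. Revealing the labels edge by edge and conditioning, the event that $v$ loses only its budgeted conflicts factors as a telescoping product: the $j$-th edge contributes a conditional factor $1-\frac{1}{j(s-1)+1}$, which is precisely the probability that $v$ is not the maximum among the $j(s-1)+1$ vertices seen so far. Running this from $j=1$ up to $j=d-k+1$ reproduces $\prod_{j=1}^{d-k+1}\bigl(1-\frac{1}{j(s-1)+1}\bigr)=f_{k,s}(d)$ for $d\ge k$. For the low-degree range $0\le d\le k$ the budget covers essentially all incident edges, and a direct first-moment (union-bound type) estimate yields the linear value $1-\frac{d}{ks}$; one checks the two formulas agree at $d=k$, where both equal $\frac{s-1}{s}$.

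The hard part will be twofold. First, one must verify that the chosen rule actually yields a set of maximum induced degree at most $k-1$ under the definition in force; the delicate configuration is a high-degree vertex that receives a \emph{small} label (a ``star centre'' that is early in the order), since a naive ``delete every vertex that is the maximum of too many edges'' rule leaves such a vertex in $S$ while all of its neighbours survive, violating the degree bound. I therefore expect to replace the naive rule by a deletion (or greedy-with-feasibility-check) procedure that also protects against this case, so that the genuine difficulty becomes analysing its expected output rather than merely certifying feasibility. Second, I must argue the inequality direction when the edges through $v$ overlap: merging two incident edges can only reduce the number of distinct ``other'' vertices, which can only help $v$ survive, so a coupling (or an FKG-type monotonicity) argument should show that the pairwise-disjoint configuration is extremal and that $\Pr[v\in S]\ge f_{k,s}(d(v))$ in general. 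Assembling these per-vertex bounds by linearity of expectation then yields $\alpha_k(H)\ge \sum_v f_{k,s}(d(v))$ and completes the proof.
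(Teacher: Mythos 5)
The paper itself contains no proof of this statement: it is quoted as background from Caro and Tuza \cite{Caro1991} (whose actual proof is a deterministic induction in which the function $f_{k,s}$ is shown to satisfy suitable recursive inequalities under vertex deletion), so your argument can only be judged on its own merits. On those merits it has a genuine gap, and it is exactly the one you flag yourself. All of the probability you actually compute --- the telescoping product $\prod_{j=1}^{d-k+1}\bigl(1-\frac{1}{j(s-1)+1}\bigr)$ for $d\ge k$, the first-moment (Markov) bound $1-\frac{d}{ks}$ for $d\le k$, and the FKG/Harris monotonicity for overlapping edges --- is an analysis of the naive rule ``keep $v$ iff $v$ is the last vertex of at most $k-1$ of its incident edges.'' But, as your star-centre configuration shows, the set produced by that rule is \emph{not} in general of maximum induced degree at most $k-1$: a centre of a large star that receives the smallest label survives, and for $k\ge 2$ so does every other vertex of the star, since each is last in exactly one edge. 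So the naive rule is not merely ``delicate''; it is simply wrong as a certificate of $k$-independence.

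Your proposed repair --- switch to an unspecified ``deletion or greedy-with-feasibility-check procedure'' --- is named but never defined, and none of your estimates transfers to it. Once membership of $v$ depends on a feasibility check, it is no longer a function of the labels of $v$ and the vertices of its incident edges alone, the survival event no longer factors edge by edge, and neither the product formula nor the Markov bound is established for the modified process. (The FKG step is also only asserted: for $k\ge 2$ the relevant event ``at most $k-1$ of the indicators hold'' is not an intersection of increasing events, so Harris's inequality does not apply verbatim as it does in the $k=1$ case.) The net effect is that you prove $\mathbb{E}|S|\ge\sum_{v}f_{k,s}(d(v))$ for a set $S$ that need not be $k$-independent, and you can certify $k$-independence only for a different set whose expected size you never bound. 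Closing that mismatch is the entire content of the theorem, and it is missing; this is precisely why Caro and Tuza's proof takes the inductive route rather than the permutation route that works so cleanly for $k=1$.
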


In this paper, we study the $k$-independence set of $s$-uniform hypergraphs. First, some necessary notations are presented in section 2. Then, in section 3, for an $s$-uniform hypergraph $H$ of order $n$, we first give a lower bound of the $k$-independent number $\alpha_k(H)$ by the maximum degree. Further more, we prove that $\alpha_k(H)\geq \frac{s(k+1)}{s(k+1)+2d}n$, where $d$ is average degree of $H$ and $k\geq 0$ is an integer. More generally we obtain that $\alpha_k(H)\geq f(\frac{2d}{s(k+1)})n$, where \[f(x)=\frac{1}{1+x}(1+\frac{\{x\}(1-\{x\})}{(\lfloor x\rfloor+1)(\lfloor x \rfloor+2)}).\]

\section{Preliminary}

\noindent

A hypergraph $H$ is a pair $H=(V,E)$ where $V$ is a set of elements, and $E$ is a set of non-empty subsets of $V$.
Therefore, $E$ is a subset of $\mathcal{P}(V)\backslash\{\emptyset\}$, where $\mathcal{P}(V)$ is the power set of $V$.
The elements of $V$ are referred as vertices, and $n(H)=|V(H)|$ denotes its cardinality, while the elements of $E$ are called edges,
and $e(H)$ denotes its cardinality. A hypergraph is said to be $s$-uniform if each edge contains precisely $s$ vertices in $E$, where $s$ is an positive integer and $s\geq 2$. For a vertex $v\in V(H)$, $deg(v)=deg_{H}(v)$ is the degree of $v$ in $H$, that is the number of edges contain $v$. We denote the maximum degree of $H$ by $\Delta(H)$ and the average degree $\frac{1}{n(H)}\sum \limits_{v\in V(H)}deg(v)$ by $d(H)$. For a vertex $v\in V(H)$, $H-v$ represents the hypergraph $H$ without vertex $v$ and all the edges incident to $v$.

We redefine the induced subhypergraph of a hypergraph as follows.

\begin{Def}
Let $H=(V,E)$ is a hypergraph, where $V=\{v_1,v_2,\cdots,v_n\}$, $E=\{e_1,e_2,\cdots,e_m\}$. For any subset $S\subseteq V$
we call the hypergraph $H[S]=(S,E')$ a subhypergraph induced by the set $S$ if $E'$ consists of all those subsets in $E$ that are
completely contained in $S$. And $deg_{S}(v)$ stands for the degree $deg_{H[S]}(v)$ of $v$ in $H[S]$.
\end{Def}

Based on the above definition, we give the definition of the $k$-independent set for hypergraphs.

\begin{Def}
Let $H=(V,E)$ be a hypergraph on $n$ vertices and $k\geq 0$ be an integer. A $k$-independent set $S\subseteq H$ is a set of vertices such that the maximum degree in the hypergraph induced by $S$ is at most $k$.
\end{Def}
Let $\alpha_k(H)$ denote the maximum cardinality of the $k$-independent set of $H$ and called it the $k$-independence number of $H$. For $k=0$ we have $\alpha_0(H)=\alpha(H)$, where $\alpha(H)$ is the independence number of $H$.

\section{Main results}

\noindent

In this section, we will give two lower bounds for the $k$-independent number of $s$-uniform hypergraphs by the maximum degree or average degree.

\begin{Def}
For an $s$-uniform hypergraph $H$, we denote by $\chi_k(H)$ the $k$-chromatic number of $H$, i.e. the minimum number $t$ such that there is a partition $V(H)=V_1(H)\cup V_2(H)\cup \cdots \cup V_t(H)$ of the vertex set $H$ such that $\Delta(H[V_i])\leq k$ for all $1\leq i\leq t$.
\end{Def}
For a set $J\subset \{1,2,\cdots,m\}$, we call the family $H'=(e_j/j\in J)$ the partial hypergraph generated by the set $J$. For $x\in X$, define the star $H(x)$ with centre $x$ to be the partial hypergraph formed by the edges containing $x$. We called a \emph{$\beta$-star} of a vertex $x$ a family $H^{\beta}(x)\subset H(x)$ such that

(i)$E\in H^{\beta}(x)\Rightarrow |E|\geq 2$.

(ii)$E,E'\in H^{\beta}(x)\Rightarrow E\cap E'=\{x\}$.

We call the \emph{$\beta$-degree} of a vertex $x$ the largest number of edges of a $\beta$-star of $x$. We denote by $d_{H}^{\beta}(x)$ the $\beta$-degree of $x$, by $\Delta^{\beta}(H)=\max \limits_{x\in X} d_{H}^{\beta}(x)$ the maximum $\beta$-degree. The following result is obtained by Lov\'{a}sz in 1968.

\begin{corollary}[\cite{L1968}]
For every hypergraph $H$ of maximum $\beta$-degree $\Delta^{\beta}$, we have $\chi(H)\leq \Delta^{\beta}(H)+1$.
\end{corollary}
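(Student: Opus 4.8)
The plan is to produce a proper colouring of $H$ using $\Delta^{\beta}(H)+1$ colours by a direct greedy argument, where by a \emph{proper} colouring I mean one in which no edge is monochromatic, so that $\chi(H)$ is the least number of colours admitting such a colouring. Fix an arbitrary ordering $v_1,v_2,\ldots,v_n$ of $V(H)$ and colour the vertices one at a time from a palette of $\Delta^{\beta}(H)+1$ colours. Throughout I assume every edge contains at least two vertices, which holds in the $s$-uniform setting with $s\geq 2$; a singleton edge could never be avoided anyway.

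When it is the turn of $x=v_i$, call a colour $j$ \emph{forbidden} if assigning $j$ to $x$ would complete a monochromatic edge, i.e.\ there is an edge $f\ni x$ all of whose remaining vertices $f\setminus\{x\}$ have already received colour $j$. If at every step I can find a non-forbidden colour, the greedy procedure never completes a monochromatic edge; and since each edge $f$ is tested precisely at its last vertex in the ordering, the resulting colouring is proper. So the whole argument reduces to bounding the number of forbidden colours at $x$.

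The key step is to show this number is at most $d^{\beta}_{H}(x)$. For each distinct forbidden colour $j$ choose one witnessing edge $f_j\ni x$ with $f_j\setminus\{x\}$ monochromatic of colour $j$. Each $f_j$ has $|f_j|\geq 2$, and for two different forbidden colours $j\neq j'$ the witnesses satisfy $f_j\cap f_{j'}=\{x\}$: a common vertex other than $x$ would have to carry both colour $j$ and colour $j'$, which is impossible. Hence the witnessing edges form a $\beta$-star centred at $x$ (conditions (i) and (ii) of the definition), so their number — equal to the number of forbidden colours — is at most $d^{\beta}_{H}(x)\leq\Delta^{\beta}(H)$. With $\Delta^{\beta}(H)+1$ colours available there is always a free colour for $x$, the greedy colouring goes through, and we conclude $\chi(H)\leq\Delta^{\beta}(H)+1$.

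The only genuine obstacle is the $\beta$-star identification in the key step: one must check \emph{simultaneously} that the witnessing edges have size at least two and meet pairwise exactly in $x$, so that the count of forbidden colours is controlled by $d^{\beta}_{H}(x)$ rather than by the ordinary degree of $x$. Should the greedy bookkeeping feel delicate, an equivalent route is to take a colouring with $\Delta^{\beta}(H)+1$ colours minimising the number of monochromatic edges, and to recolour a vertex of a putative monochromatic edge by a non-forbidden colour; the same $\beta$-star bound guarantees such a colour exists, and the recolouring strictly decreases the number of monochromatic edges, contradicting minimality.
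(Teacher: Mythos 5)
Your proof is correct, but note that the paper itself contains no proof of this statement: it is quoted as Corollary 3.2 directly from Lov\'asz \cite{L1968} and used as a black box to derive Theorem 3.3, so there is no internal argument to compare yours against --- what you have done is supply the missing classical proof. Your key step is sound and is exactly the right one: for distinct forbidden colours $j\neq j'$ the witnessing edges $f_j, f_{j'}$ can share no vertex other than $x$ (such a vertex would carry both colours), and each has size at least two, so the witnesses form a $\beta$-star at $x$; hence the number of forbidden colours is bounded by $d^{\beta}_{H}(x)\leq \Delta^{\beta}(H)$ rather than by the ordinary degree, leaving a free colour among the $\Delta^{\beta}(H)+1$ available at every step of the greedy procedure. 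Your fallback argument (take a colouring with $\Delta^{\beta}(H)+1$ colours minimising the number of monochromatic edges and recolour a vertex of a monochromatic edge with a non-forbidden colour, strictly decreasing that number) is equally valid and is essentially the form of the classical argument. The only hypothesis worth flagging is your treatment of singleton edges: the bound for ``every hypergraph'' only makes sense under the convention that edges of size one are exempt from the properness requirement, and your standing assumption $|e|\geq 2$ --- harmless in this paper, where the corollary is applied to $s$-uniform hypergraphs with $s\geq 2$ --- implicitly adopts that convention.
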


Then we can obtain an upper bound for the $k$-chromatic number of $s$-uniform hypergraphs.

\begin{theorem}
If $H$ is an $s$-uniform hypergraph of maximum degree $\Delta$, then $\chi_k(H)\leq \lceil \frac{\Delta}{k}\rceil$.
\end{theorem}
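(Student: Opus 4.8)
The plan is to produce the desired partition by an extremal (local-switching) argument. Set $t := \lceil \Delta/k\rceil$ and range over all colorings $c : V(H) \to \{1,\dots,t\}$, which induce partitions $V(H)=V_1\cup\cdots\cup V_t$ into $t$ (possibly empty) blocks. Among the finitely many such partitions, I would fix one that minimizes the number of \emph{monochromatic} edges, i.e. edges $e\in E(H)$ with $e\subseteq V_i$ for some $i$. The goal is then to show that this minimizer already satisfies $\Delta(H[V_i])\le k$ for every $i$, which immediately yields $\chi_k(H)\le t=\lceil \Delta/k\rceil$.

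For a vertex $v$ and a block $V_j$, let $d_j(v)$ denote the number of edges $e\ni v$ with $e\setminus\{v\}\subseteq V_j$. The key step is the local-move computation: if $v\in V_i$, then reassigning $v$ from $V_i$ to any other block $V_j$ destroys exactly the $d_i(v)$ monochromatic edges through $v$ lying inside $V_i$ and creates exactly the $d_j(v)$ edges $e\ni v$ with $e\setminus\{v\}\subseteq V_j$, while leaving the monochromatic status of every other edge unchanged. By minimality the net change $d_j(v)-d_i(v)$ must be nonnegative, so $d_i(v)\le d_j(v)$ for all $j$.

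The second step is a simple averaging. Since $H$ is $s$-uniform with $s\ge 2$, for each edge $e\ni v$ the set $e\setminus\{v\}$ is nonempty, hence (the blocks being pairwise disjoint) it is contained in at most one block; therefore $\sum_{j=1}^{t} d_j(v)\le \deg(v)\le \Delta$. Combining this with $d_i(v)\le d_j(v)$ for every $j$ gives $t\,d_i(v)\le \sum_{j} d_j(v)\le \Delta$, so $d_i(v)\le \Delta/t=\Delta/\lceil \Delta/k\rceil\le k$. As $d_i(v)=\deg_{V_i}(v)$ for $v\in V_i$, every block induces a subhypergraph of maximum degree at most $k$, as required.

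I do not expect a genuine obstacle on this route; the one point needing care is the bookkeeping in the local move, namely verifying that an edge through $v$ changes its monochromatic status only in the two ways listed and that edges contained in a third block $V_\ell$ (with $\ell\neq i,j$) are untouched, together with the use of $s\ge 2$ to guarantee $e\setminus\{v\}\neq\emptyset$, which is exactly what makes the bound $\sum_{j} d_j(v)\le \deg(v)$ valid. The stated Lov\'asz corollary is not needed for this argument, although it could furnish an alternative proof via an auxiliary chromatic number.
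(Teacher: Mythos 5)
Your proof is correct, and it takes a genuinely different route from the paper. The paper does not argue directly: it forms an auxiliary hypergraph $H'$ on $V(H)$ whose edges are (in effect) the minimal vertex sets inducing a subhypergraph of maximum degree exceeding $k$, observes that $\chi_k(H)=\chi(H')$, and then invokes Lov\'asz's bound $\chi(H')\leq \Delta^{\beta}(H')+1$ (Corollary 3.2), bounding the maximum $\beta$-degree of $H'$ by $[\Delta/k]$. Your argument instead fixes $t=\lceil \Delta/k\rceil$, takes a $t$-part partition minimizing the number of monochromatic edges, and combines the local-switching inequality $d_i(v)\leq d_j(v)$ with the averaging bound $t\,d_i(v)\leq \sum_j d_j(v)\leq \deg(v)\leq \Delta$; your bookkeeping for the switch and the use of $s\geq 2$ (so that $e\setminus\{v\}$ is nonempty and lies in at most one block) are exactly right. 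What your approach buys is self-containedness and robustness: it needs no external theorem, it works for any hypergraph whose edges all have size at least $2$ (uniformity is never really used), and every step is checkable. The paper's approach buys brevity given the Lov\'asz machinery and a connection to classical hypergraph coloring, but it leaves the key step --- that $\Delta^{\beta}(H')\leq [\Delta/k]$ --- unproved, and its final identity $[\Delta/k]+1=\lceil \Delta/k\rceil$ actually fails when $k$ divides $\Delta$, so your direct argument is arguably the more trustworthy of the two. (Both your proof and the theorem itself implicitly assume $k\geq 1$ and $\Delta\geq 1$, since $\lceil \Delta/k\rceil$ is otherwise undefined or zero; this is a defect of the statement, not of your argument.)
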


\begin{proof}
For an $s$-uniform hypergraph $H$ on $V(H)$ of maximum degree $\Delta$, let $H'$ be the hypergraph on $V(H)$ whose edges are the subgraphs of maximum degree $k$. Then by Definition 3.1 and Corollary 3.2, we have
\[\chi_k(H)=\chi(H')\leq \Delta^{\beta}(H')+1 \leq [\frac{\Delta}{k}]+1=\lceil \frac{\Delta}{k}\rceil\]
where $[x]$ is the integer-valued function, $\lceil x\rceil$ is the ceiling function.
\end{proof}

Now since $\alpha_k(H)\geq \frac{n}{\chi_k(H)}$, we get a lower bound for the $k$-independent number of $s$-uniform hypergraphs by the maximum degree.

\begin{theorem}
Let $H$ be an $s$-uniform hypergraph of order $n$ and maximum degree $\Delta$. Then
\[\alpha_k(H)\geq \frac{n}{\lceil \frac{\Delta}{k}\rceil}\]
\end{theorem}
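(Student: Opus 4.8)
The plan is to combine the chromatic bound already proved as Theorem 3.3 with the standard averaging inequality $\alpha_k(H)\geq \frac{n}{\chi_k(H)}$ that is invoked in the sentence preceding the statement. Essentially everything reduces to justifying that one inequality carefully and then substituting.

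First I would establish the engine $\alpha_k(H)\geq \frac{n}{\chi_k(H)}$. By Definition 3.1, setting $t=\chi_k(H)$, there is a partition $V(H)=V_1\cup V_2\cup\cdots\cup V_t$ with $\Delta(H[V_i])\leq k$ for every $1\leq i\leq t$. By Definition 3.2, each part $V_i$ is itself a $k$-independent set of $H$, since the maximum degree in the induced subhypergraph $H[V_i]$ is at most $k$. Because the parts are disjoint and cover $V(H)$, we have $\sum_{i=1}^{t}|V_i|=n$, so by averaging at least one index $i_0$ satisfies $|V_{i_0}|\geq n/t$. As $V_{i_0}$ is a $k$-independent set, the definition of $\alpha_k$ gives
\[
\alpha_k(H)\geq |V_{i_0}|\geq \frac{n}{t}=\frac{n}{\chi_k(H)}.
\]

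Next I would apply Theorem 3.3, which asserts $\chi_k(H)\leq \lceil \tfrac{\Delta}{k}\rceil$ for an $s$-uniform hypergraph of maximum degree $\Delta$ (here $k\geq 1$ so the expression is well defined). Since the map $x\mapsto n/x$ is decreasing for $x>0$, an upper bound on $\chi_k(H)$ immediately yields a lower bound on $n/\chi_k(H)$, namely $\frac{n}{\chi_k(H)}\geq \frac{n}{\lceil \Delta/k\rceil}$. Chaining this with the averaging inequality above produces the claimed estimate $\alpha_k(H)\geq \frac{n}{\lceil \Delta/k\rceil}$.

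There is no genuine obstacle once Theorem 3.3 is granted; the only points needing care are the two routine checks that turn the pigeonhole step into a clean inequality — confirming that a maximum color class really is $k$-independent (immediate from Definition 3.2) and that $\chi_k(H)\geq 1$ so the division is legitimate (automatic, and consistent with $\lceil\Delta/k\rceil\geq 1$). Thus the proof is essentially a two-line substitution, with all the real content already absorbed into the preceding chromatic-number bound.
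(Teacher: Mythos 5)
Your proposal is correct and follows exactly the paper's route: the paper derives this theorem from the remark $\alpha_k(H)\geq \frac{n}{\chi_k(H)}$ (stated without proof just before the theorem) combined with Theorem 3.3's bound $\chi_k(H)\leq \lceil \frac{\Delta}{k}\rceil$. The only difference is that you spell out the pigeonhole justification of the averaging inequality, which the paper takes as evident.
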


Next, to give another lower bound for the $k$-independent number of $s$-uniform hypergraphs by the average degree, we need more preparations.

\begin{Def}
Define the function $f(x)$ for real $x\geq 0$ in the following manner:
\[f(x)=\frac{1}{1+x}(1+\frac{\{x\}(1-\{x\})}{(\lfloor x\rfloor+1)(\lfloor x \rfloor+2)}),\]
where $\lfloor x\rfloor$ is the floor function and $\{x\}$ is the fractional part function.
\end{Def}

\begin{lemma}[\cite{Kogan2017}]
Let $x\geq 0$ be a real number. Then $f(x)\geq \frac{1}{1+x}$, and equality holds if and only if $x$ is an integer.
\end{lemma}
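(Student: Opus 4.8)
The plan is to observe that the claim is a direct sign analysis of the correction term built into the definition of $f$. Since $x\geq 0$, the leading factor $\tfrac{1}{1+x}$ is strictly positive, so I would first divide it out: the inequality $f(x)\geq \tfrac{1}{1+x}$ is equivalent to showing that
\[
1+\frac{\{x\}(1-\{x\})}{(\lfloor x\rfloor+1)(\lfloor x \rfloor+2)}\geq 1,
\]
which in turn is equivalent to the statement that the single fraction $\tfrac{\{x\}(1-\{x\})}{(\lfloor x\rfloor+1)(\lfloor x \rfloor+2)}$ is nonnegative.

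Next I would verify the sign of that fraction factor by factor. Because $x\geq 0$ we have $\lfloor x\rfloor\geq 0$, so both $\lfloor x\rfloor+1\geq 1$ and $\lfloor x\rfloor+2\geq 2$ are strictly positive and the denominator is positive. For the numerator, the fractional part satisfies $0\leq \{x\}<1$, whence $\{x\}\geq 0$ and $1-\{x\}>0$; their product is therefore nonnegative. This immediately yields the desired inequality.

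For the equality case I would argue that equality forces the correction fraction to vanish, and since the denominator never vanishes, this requires $\{x\}(1-\{x\})=0$. Here the only genuinely delicate point — and the one worth stating explicitly — is that $1-\{x\}$ is strictly positive for every $x$ (as $\{x\}<1$ always), so the product can vanish only when $\{x\}=0$, i.e. exactly when $x$ is an integer; conversely, $\{x\}=0$ makes the correction term zero and gives equality. This completes both directions. I do not anticipate any real obstacle, as the argument is entirely elementary; the sole thing to be careful about is not to claim equality could also arise from $1-\{x\}=0$, which is impossible on the range of the fractional part.
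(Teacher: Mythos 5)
Your proof is correct and complete: the sign analysis of the correction term, together with the observation that $1-\{x\}>0$ always holds so that equality forces $\{x\}=0$, is exactly the argument this lemma needs. Note that the paper itself gives no proof here — it imports the lemma from \cite{Kogan2017} — so your elementary verification fills in precisely what the cited source establishes, with no gap.
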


Then our second result can be presented as following.

\begin{theorem}
Let $k\geq 0$ be an integer. Then for any s-uniform hypergraph $H$ of order $n$ and average degree $d$, we have
\[\alpha_k(H)\geq f(\frac{2d}{s(k+1)})n.\]
\end{theorem}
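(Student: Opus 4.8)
The plan is to deduce the average-degree statement from a degree-sequence (Caro--Tuza type) bound by a convexity argument, with the function $f$ of Definition 3.5 entering precisely as the extremal convex profile. Concretely I would prove Theorem 3.7 in three moves: (i) a per-vertex estimate $\alpha_k(H)\ge \sum_{v\in V} g(d(v))$ for a suitable function $g$ of the individual degrees; (ii) a pointwise comparison $g(d)\ge f\!\left(\tfrac{2d}{s(k+1)}\right)$ for every nonnegative integer $d$; and (iii) Jensen's inequality for the convex function $f$, using that $\tfrac1n\sum_v \tfrac{2d(v)}{s(k+1)}=\tfrac{2d}{s(k+1)}$, to pass from the sum to $n\,f\!\left(\tfrac{2d}{s(k+1)}\right)$. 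Chaining these gives $\alpha_k(H)\ge \sum_v g(d(v))\ge \sum_v f\!\left(\tfrac{2d(v)}{s(k+1)}\right)\ge n\,f\!\left(\tfrac{2d}{s(k+1)}\right)$, which is the claim.

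For move (i) the natural route is the random-permutation (Caro--Wei) method adapted to $s$-uniform hypergraphs and to the defect parameter $k$: order $V$ by a uniformly random permutation, select vertices according to a rule governed by how many incident edges have $v$ as their last vertex, and then apply the monotone repair $S=\{v\in S_0:\deg_{S_0}(v)\le k\}$, which is $k$-independent for \emph{any} $S_0$ since deleting vertices only lowers induced degrees. Alternatively one may invoke the Caro--Tuza degree-sequence bound (Theorem 1.4), after reconciling its indexing (there ``$1$-independent'' means independent) with Definition 2.3, and take $g=f_{k,s}$ directly. The essential point is that the effective per-vertex parameter is $\tfrac{2d(v)}{s(k+1)}$ rather than $d(v)$, reflecting that a hyperedge constrains the induced degree only when all $s$ of its vertices survive; pinning down the exact constants $2$ and $s$ is itself part of obstacle (ii).

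Move (iii) is the cleanest. The function $f$ is exactly the piecewise-linear interpolation of the sequence $j\mapsto \tfrac1{j+1}$ at the integers, since one checks $f(x)=(1-\{x\})\tfrac{1}{\lfloor x\rfloor+1}+\{x\}\tfrac{1}{\lfloor x\rfloor+2}$. That sequence is convex, its second difference being $\tfrac1j-\tfrac2{j+1}+\tfrac1{j+2}=\tfrac{2}{j(j+1)(j+2)}>0$, so the interpolant $f$ is a convex function; Jensen's inequality then yields the average-degree form at once, and Lemma 3.6 recovers the simpler bound $\alpha_k(H)\ge \tfrac{s(k+1)n}{s(k+1)+2d}$ advertised in the abstract.

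The hard part will be move (ii): establishing the pointwise inequality $g(d)\ge f\!\left(\tfrac{2d}{s(k+1)}\right)$ for all integers $d\ge 0$ and all $k$, where $g$ is the product-type profile. This is a genuine analytic estimate, comparing a finite product (or a binomial-tail probability, if (i) is proved from scratch) against the explicit interpolant $f$, and it is precisely here that the factors $2$, $s$ and $k+1$ must line up. A secondary difficulty, should one prove (i) directly rather than cite Theorem 1.4, is to guarantee that the retained set is genuinely $k$-independent without sacrificing the sharp constant: a purely local selection rule such as ``at most $k$ smaller neighbours'' does \emph{not} control a vertex's induced degree (a star is a counterexample), so the monotone repair or a careful greedy argument is required, and one must still show its expected size matches $\sum_v f\!\left(\tfrac{2d(v)}{s(k+1)}\right)$ rather than a constant-factor-weaker quantity.
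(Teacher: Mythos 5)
Your move (iii) is sound: the identity $f(x)=(1-\{x\})\frac{1}{\lfloor x\rfloor+1}+\{x\}\frac{1}{\lfloor x\rfloor+2}$ is correct, so $f$ is the piecewise-linear interpolant of the convex sequence $j\mapsto\frac{1}{j+1}$, convexity (Lemma 3.11) holds, and Jensen's inequality would indeed finish the argument \emph{if} you had moves (i) and (ii). But the proof stands or falls on move (ii), which you leave open, and for the one concrete candidate you name it is false. Theorem 1.4 bounds the paper's $\alpha_k$ via $f_{k+1,s}$ (after the reindexing you mention, since Caro--Tuza's ``$1$-independent'' is the paper's $0$-independent). Now take $s=2$, $k=1$ and any $4$-regular graph: the Caro--Tuza per-vertex value is $f_{2,2}(4)=\prod_{j=1}^{3}\bigl(1-\tfrac{1}{j+1}\bigr)=\tfrac12\cdot\tfrac23\cdot\tfrac34=\tfrac14$, whereas the theorem requires $f\bigl(\tfrac{2\cdot 4}{2\cdot 2}\bigr)=f(2)=\tfrac13$. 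For regular (hyper)graphs Jensen is an equality, so the chain (i)--(iii) with $g=f_{k+1,s}$ can never deliver more than $n/4$ where Theorem 3.7 claims $n/3$. This is not a computational slip on your part that better bookkeeping will fix: Kogan's theorem (of which Theorem 3.7 is the hypergraph analogue) was introduced precisely because it \emph{beats} the Caro--Tuza degree-sequence bound in this regime, so no pointwise comparison $f_{k+1,s}(d)\geq f\bigl(\tfrac{2d}{s(k+1)}\bigr)$ can hold.

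Your fallback, route (a) --- proving from scratch a per-vertex bound $\alpha_k(H)\geq\sum_{v}f\bigl(\tfrac{2d(v)}{s(k+1)}\bigr)$ via random permutations and monotone repair --- is not a repair but an escalation: that statement is a degree-sequence \emph{strengthening} of Theorem 3.7 (it implies the theorem by Jensen and, for regular hypergraphs, is equivalent to it), it is not supplied by anything cited in the paper, and you give no argument for it beyond a sketch while yourself flagging that the constants $2$, $s$, $k+1$ are exactly what is unresolved. So the proposal defers the entire difficulty to a lemma that is false in its concrete instantiation and unproven in its abstract one. The paper's actual proof takes a different, global route that avoids per-vertex estimates altogether: induction on the integer $r$ with $\frac{s}{2}r(k+1)<d\leq\frac{s}{2}(r+1)(k+1)$, using the greedy bound $\alpha_k(H)\geq n-\frac{e}{k+1}$ (Lemma 3.13), the reduction Lemma 3.12, the maximum-degree bound of Theorem 3.4 (via the $k$-chromatic number and Lov\'{a}sz's bound), and iterated deletion of vertices of degree at least $\frac{s}{2}(r+1)(k+1)$ until the induction hypothesis applies to what remains. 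That deletion-plus-induction mechanism is precisely what substitutes for the pointwise estimate your plan needs and does not have.
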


\begin{corollary}
Let $k\geq 0$ be an integer. Then for any s-uniform hypergraph $H$ of order $n$ and average degree $d$, we have
\[\alpha_k(H)\geq \frac{s(k+1)}{s(k+1)+2d}n.\]
\end{corollary}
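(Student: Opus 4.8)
The plan is to adapt Kogan's random-ordering argument for graphs to the $s$-uniform setting and to isolate the role of $f$ as an \emph{integer-constrained convexity bound}. Write $m=e(H)$ for the number of edges, so that $\sum_{v\in V}\deg(v)=sm$ and hence $d=sm/n$; in particular the target argument is $\frac{2d}{s(k+1)}=\frac{2m}{n(k+1)}$. The objective is to construct a $k$-independent set $S$ together with nonnegative \emph{integers} $\{r_v\}_{v\in V}$ for which
\[
\mathbb{E}\,|S|\ \ge\ \sum_{v\in V}\frac{1}{1+r_v},\qquad \sum_{v\in V}r_v\ \le\ \frac{2m}{k+1},
\]
after which a purely analytic step closes the argument.

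I would first dispatch the analytic step, which I expect to be routine. The key is the identity
\[
f(x)=\frac{1-\{x\}}{\lfloor x\rfloor+1}+\frac{\{x\}}{\lfloor x\rfloor+2},
\]
proved by clearing denominators (it reduces to $(\lfloor x\rfloor+2-\{x\})(\lfloor x\rfloor+1+\{x\})=(\lfloor x\rfloor+1)(\lfloor x\rfloor+2)+\{x\}(1-\{x\})$, an identity). This exhibits $f$ as the piecewise-linear interpolation, at the integers, of the convex decreasing sequence $r\mapsto\frac{1}{1+r}$; hence $f$ is continuous, decreasing and convex, and $n\,f(\bar r)$ is exactly the minimum of $\sum_v\frac{1}{1+r_v}$ over nonnegative integers with fixed mean $\bar r=\frac1n\sum_v r_v$ (a convex sum over integer points is minimized at the most balanced assignment $r_v\in\{\lfloor\bar r\rfloor,\lceil\bar r\rceil\}$). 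Combining this with $\bar r\le\frac{2m}{n(k+1)}=\frac{2d}{s(k+1)}$ and the monotonicity of $f$ yields
\[
\mathbb{E}\,|S|\ \ge\ \sum_{v}\frac{1}{1+r_v}\ \ge\ n\,f(\bar r)\ \ge\ n\,f\!\Big(\tfrac{2d}{s(k+1)}\Big),
\]
and since some outcome attains the expectation, $\alpha_k(H)\ge f(\frac{2d}{s(k+1)})n$. The corollary is then immediate from $f(x)\ge\frac{1}{1+x}$ (the preceding lemma) at $x=\frac{2d}{s(k+1)}$.

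The combinatorial core is where I expect the real difficulty. I would take a uniformly random permutation $\pi$ of $V$ and build $S$ by processing vertices in the order of $\pi$, admitting a vertex only when the induced subhypergraph retains maximum degree at most $k$. The delicate point, absent when $k=0$, is that induced degree is a \emph{symmetric} quantity: admitting $v$ completes precisely those edges $e\ni v$ whose remaining $s-1$ vertices already lie in $S$, and each completion raises the degree of \emph{all} $s$ vertices of $e$, not merely that of $v$. Consequently a one-sided rule (such as "admit $v$ if it is the last vertex of at most $k$ present edges") controls only a last-vertex count and does not bound the true degree. The crux is to design an admission/deletion rule whose per-vertex charge $r_v$ is genuinely integer-valued, provably certifies $\Delta(H[S])\le k$, and—this is where the factor $2$ rather than $s$ must appear—charges each completed edge to only about two of its vertices, so that $\sum_v r_v$ is governed by $\frac{2m}{k+1}$ instead of $\frac{sm}{k+1}$.

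As a sanity check and fallback I would keep the crude deletion method in reserve: include each vertex independently with probability $p$ to form $A$, then repeatedly delete a vertex of induced degree exceeding $k$; each deletion destroys at least $k+1$ edges of $H[A]$, so at most $e(H[A])/(k+1)$ vertices are removed, giving $\alpha_k(H)\ge pn-\frac{m\,p^{s}}{k+1}$ after taking expectations. This produces a clean bound but is lossy by a constant factor and recovers neither the sharp coefficient $\frac{2d}{s(k+1)}$ nor the refinement $f$. Supplying that exact coefficient is exactly the quantitative content the ordering argument must deliver, and verifying that the resulting $r_v$ are integers with $\sum_v r_v\le\frac{2m}{k+1}$ is the single step I expect to demand the most care.
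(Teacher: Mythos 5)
Your proposal has a genuine gap at exactly the point you flag yourself: the combinatorial core is never constructed. Everything hinges on producing an admission/deletion rule that outputs a $k$-independent set $S$ together with nonnegative integers $r_v$ satisfying $\mathbb{E}|S|\geq \sum_{v}\frac{1}{1+r_v}$ and $\sum_{v}r_v\leq \frac{2m}{k+1}$. But this intermediate claim is essentially as strong as Theorem 3.7 itself (your convexity step shows it implies the theorem), and you supply no construction --- only the list of properties it would need. Your own analysis of why a one-sided rule fails for $k\geq 1$ (admitting $v$ raises the induced degree of \emph{all} $s$ vertices of each completed edge) is correct, and it is precisely why no Caro--Wei-type random-ordering argument achieving the coefficient $\frac{2}{k+1}$ is available; notably, Kogan's paper, which you cite as the model, does not use a random ordering at all. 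Your fallback (independent sampling with probability $p$ followed by deletion) is, as you concede, lossy by a constant factor and does not yield the stated bound. So the proposal reduces the corollary to an unproved claim that carries all of the difficulty.

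For contrast, the paper's route is deterministic and inductive. The corollary follows from Theorem 3.7 via $f(x)\geq\frac{1}{1+x}$, and Theorem 3.7 is proved by induction on the integer $r$ for which $d\leq\frac{s}{2}(r+1)(k+1)$. The base case is the greedy deletion bound $\alpha_k(H)\geq n-\frac{e}{k+1}$ (Lemma 3.13: repeatedly delete a vertex of induced degree at least $k+1$; each deletion destroys at least $k+1$ edges). In the inductive step one deletes, at most $t$ times, a vertex of degree at least $\frac{s}{2}(r+1)(k+1)$; either the process halts early with small maximum degree, where Theorem 3.4 applies, or after $t$ steps the average degree has dropped to at most $\frac{s}{2}r(k+1)$, where the induction hypothesis applies. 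The factor $2$ that you hoped to obtain by charging each edge to two of its vertices enters instead through the algebra relating $n-t$ to $\frac{2}{r+2}\bigl(n-\frac{e}{(r+1)(k+1)}\bigr)$, and the function $f$ enters through the calibration Lemma 3.12. Your analytic observation that $f$ is the piecewise-linear interpolation of $r\mapsto\frac{1}{1+r}$ at the integers is correct and is a clean way to understand Lemmas 3.6, 3.10 and 3.11, but on its own it proves nothing about $\alpha_k(H)$.
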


\begin{proof}
This follows from Theorem 3.7 and Lemma 3.6, as
\[\frac{1}{1+\frac{2d}{s(k+1)}}=\frac{s(k+1)}{s(k+1)+2d}.\]
\end{proof}
Before proving Theorem 3.7 we will need a few more lemmas and definitions.

\begin{Def}
Define a function $g(x)$ for real $x\geq 0$ in the following manner:
\[
   g(x)=\left\{
      \begin{array}{ll}
      1, \ \ \ \ \ \ \ \ \ \ if \ x=0;\\
      \frac{2\lceil x\rceil-x}{\lceil x\rceil(1+\lceil x\rceil)}, \ if \ x>0.
   \end {array}
   \right.
\]
where $\lceil x\rceil$ is the ceiling function.
\end{Def}

By the result of \cite{Kogan2017}, we easily have the following two lemmas:

\begin{lemma}[\cite{Kogan2017}]
For all real $x>0$ we have $f(x)=g(x)$.
\end{lemma}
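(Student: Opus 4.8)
The plan is to establish the identity $f(x)=g(x)$ by a direct case analysis according to whether $x$ is an integer, since the floor, ceiling, and fractional-part functions appearing in the two definitions behave qualitatively differently in those two regimes. First I would dispose of the case where $x$ is a positive integer. Here $\lfloor x\rfloor=\lceil x\rceil=x$ and $\{x\}=0$, so the correction term in Definition 3.5 vanishes and $f(x)=\frac{1}{1+x}$; substituting $\lceil x\rceil=x$ into Definition 3.9 gives $g(x)=\frac{2x-x}{x(1+x)}=\frac{1}{1+x}$, so the two sides agree immediately. This also serves as a consistency check against Lemma 3.6, which asserts equality $f(x)=\frac{1}{1+x}$ exactly at integer $x$.

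The substantive case is when $x>0$ is not an integer. Here I would introduce the notation $n=\lfloor x\rfloor\geq 0$ and $t=\{x\}$ with $0<t<1$, so that $x=n+t$ and $\lceil x\rceil=n+1$. The goal is to rewrite both functions with a common denominator and reduce the claimed equality to a polynomial identity in $n$ and $t$. Concretely, after clearing denominators one finds that $f(x)=g(x)$ is equivalent to
\[
(n+1)(n+2)+t(1-t)=(n+2-t)(n+1+t).
\]
The plan is then to expand the right-hand side and verify that it collapses to the left-hand side: the cross terms $(n+2)t-(n+1)t=t$ and the quadratic term $-t^2$ combine to give exactly $t(1-t)$, matching the correction term on the left.

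The main point to watch is not any deep obstacle — this is a routine algebraic verification — but rather the bookkeeping of the case split, ensuring that the relations $\lceil x\rceil=\lfloor x\rfloor+1$ and $x=\lfloor x\rfloor+\{x\}$ are applied only in the non-integer regime, and that the denominators $(n+1)(n+2)$ and $(1+n+t)$ are nonzero throughout (which is guaranteed since $n\geq 0$ and $t\in(0,1)$). Once the polynomial identity above is confirmed by a direct expansion, the equality $f(x)=g(x)$ follows on both ranges, completing the proof for all real $x>0$.
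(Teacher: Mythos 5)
Your proof is correct. Note that the paper itself offers no argument for this lemma at all: it simply states it with the citation to Kogan's paper (``By the result of \cite{Kogan2017}, we easily have the following two lemmas''), so there is no in-paper proof to compare against. Your case split (integer versus non-integer $x$) is exactly the right bookkeeping, and the reduction of the non-integer case to the polynomial identity
\[
(n+1)(n+2)+t(1-t)=(n+2-t)(n+1+t),\qquad n=\lfloor x\rfloor,\ t=\{x\}\in(0,1),
\]
checks out: expanding the right-hand side gives $(n+1)(n+2)+\bigl((n+2)-(n+1)\bigr)t-t^2=(n+1)(n+2)+t(1-t)$. The integer case is equally routine, since then $\{x\}=0$, $\lceil x\rceil=\lfloor x\rfloor=x$, and both sides equal $\frac{1}{1+x}$. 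Your self-contained verification thus supplies precisely the elementary computation that the paper (and its source) leaves to the reader, which is a useful completion rather than a deviation.
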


\begin{lemma}[\cite{Kogan2017}]
The function $f(x)$ is continuous, monotonically decreasing and convex on the interval $[0,\infty)$.
\end{lemma}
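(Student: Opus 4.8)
The plan is to prove all three properties at once by passing from $f$ to the much simpler function $g$ of Definition 3.9. By Lemma 3.10 we have $f(x)=g(x)$ for every $x>0$, and a direct check gives $f(0)=\frac{1}{1}(1+0)=1=g(0)$, so in fact $f\equiv g$ on the whole interval $[0,\infty)$. Thus it suffices to establish continuity, monotonic decrease, and convexity for $g$. The key observation is that $g$ is \emph{piecewise affine}: on each interval $(n,n+1]$ with $n\geq 0$ an integer one has $\lceil x\rceil=n+1$, whence
\[g(x)=\frac{2(n+1)-x}{(n+1)(n+2)},\]
a linear function of $x$ with slope $m_n=-\frac{1}{(n+1)(n+2)}<0$. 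This affine description on each piece is what makes the three claims tractable.

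First I would settle continuity and monotonicity together. On each open piece $(n,n+1)$ the function $g$ is affine, hence continuous and, since $m_n<0$, strictly decreasing. It then remains to verify continuity at the integer breakpoints. At $x=m$ with $m\geq 1$ the value coming from the piece $(m-1,m]$ is $g(m)=\frac{2m-m}{m(m+1)}=\frac{1}{m+1}$, while the right-hand limit coming from the piece $(m,m+1]$ is $\lim_{x\to m^+}\frac{2(m+1)-x}{(m+1)(m+2)}=\frac{m+2}{(m+1)(m+2)}=\frac{1}{m+1}$; these agree, and the analogous computation at the origin gives $\lim_{x\to 0^+}\frac{2-x}{2}=1=g(0)$. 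Hence $g$ is continuous on $[0,\infty)$. Since $g$ is continuous and strictly decreasing on each piece, the fact that consecutive pieces join continuously forces $g$ to be strictly decreasing on all of $[0,\infty)$.

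The main point, and the step I expect to be the real obstacle, is convexity; here the affine structure is decisive. A continuous piecewise-affine function is convex precisely when its slopes are non-decreasing from left to right, i.e. when at each breakpoint the left-hand slope does not exceed the right-hand slope. Concretely I would compare consecutive slopes and check $m_n\leq m_{n+1}$, namely
\[-\frac{1}{(n+1)(n+2)}\ \leq\ -\frac{1}{(n+2)(n+3)},\]
which is equivalent to $(n+1)(n+2)\leq(n+2)(n+3)$, that is to $n+1\leq n+3$, and so holds for every $n\geq 0$. Thus the slopes increase across every breakpoint, and the continuous piecewise-affine function $g$ is convex on $[0,\infty)$. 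The only subtlety worth spelling out is the convexity criterion \emph{at the kinks} (the integer points), since each affine piece is trivially convex on its interior; once the slope comparison $m_n\le m_{n+1}$ is in hand, the standard fact that a continuous function with non-decreasing one-sided derivatives is convex completes the argument, and together with continuity and monotonicity this establishes the lemma.
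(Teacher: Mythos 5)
Your proposal is correct, and every computation in it checks out: on $(n,n+1]$ one indeed has $g(x)=\frac{2(n+1)-x}{(n+1)(n+2)}$ with slope $m_n=-\frac{1}{(n+1)(n+2)}$, the one-sided values at each integer breakpoint match (both equal $\frac{1}{m+1}$ at $x=m$), and the slope comparison $m_n\le m_{n+1}$ reduces to $n+1\le n+3$ as you say, so the standard criterion for convexity of a continuous piecewise-affine function applies. One point of context: the paper itself offers \emph{no} proof of this lemma --- it is imported verbatim from \cite{Kogan2017} with the remark that it follows ``easily'' --- so there is no in-paper argument to compare against; your write-up actually supplies the missing details, and it does so along exactly the route the paper's setup suggests (and that the cited source uses), namely reducing $f$ to the piecewise-affine $g$ via Lemma 3.10. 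You were also right to handle $x=0$ separately, since Lemma 3.10 as stated only asserts $f(x)=g(x)$ for $x>0$; your direct check $f(0)=1=g(0)$ closes that small gap, and your conclusion of \emph{strict} decrease is slightly stronger than what the lemma claims. The only cosmetic caveat is that your continuity-plus-piecewise-decrease argument for global monotonicity could be stated via the standard chain $g(x)\ge g(n+1)\ge\cdots\ge g(m)\ge g(y)$ for $x<y$ spanning several pieces, but the reasoning as given is sound.
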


\begin{lemma}
Let $k\geq 0$ and $r\geq 0$ be integers. Let $H$ is an s-uniform hypergraph of order $n$ with $e$ edges and average degree $d=\frac{se}{n}$. If $\frac{s}{2}r(k+1)<d\leq \frac{s}{2}(r+1)(k+1)$ holds and
\[\alpha_k(H)\geq \frac{2}{r+2}(n-\frac{e}{(r+1)(k+1)}),\]
then $\alpha_k(H)\geq f(\frac{2d}{s(k+1)})n$.
\end{lemma}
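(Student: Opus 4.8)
The plan is to show that the lower bound assumed in the hypothesis is \emph{literally equal} to the target bound $f(\tfrac{2d}{s(k+1)})n$, so that the lemma reduces to an algebraic identity together with the correct evaluation of the ceiling function. First I would set $x=\frac{2d}{s(k+1)}$ and translate the degree hypothesis $\frac{s}{2}r(k+1)<d\leq \frac{s}{2}(r+1)(k+1)$ by dividing through by $\frac{s}{2}(k+1)$; this yields $r<x\leq r+1$, and hence $\lceil x\rceil=r+1$ in every case (including the boundary $x=r+1$). Note also that since $r\geq 0$ the strict left inequality forces $d>0$, so $x>0$ and we are safely in the regime where Lemma 3.9 applies.

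Next I would invoke Lemma 3.9 to replace $f(x)$ by $g(x)=\frac{2\lceil x\rceil-x}{\lceil x\rceil(1+\lceil x\rceil)}$ and substitute $\lceil x\rceil=r+1$, obtaining
\[
f(x)=\frac{2(r+1)-x}{(r+1)(r+2)}.
\]
I would then use the defining identity $d=\frac{se}{n}$, which gives $x=\frac{2d}{s(k+1)}=\frac{2e}{n(k+1)}$, and substitute this into the expression for $f(x)$.

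The remaining step is a short simplification of $f(x)\,n$. Multiplying out and factoring a $2$ from the numerator,
\[
f(x)\,n=\frac{2(r+1)n-\frac{2e}{k+1}}{(r+1)(r+2)}=\frac{2}{r+2}\Bigl(n-\frac{e}{(r+1)(k+1)}\Bigr),
\]
which is exactly the right-hand side of the assumed inequality. Combining this identity with the hypothesis $\alpha_k(H)\geq\frac{2}{r+2}(n-\frac{e}{(r+1)(k+1)})$ then yields $\alpha_k(H)\geq f(\frac{2d}{s(k+1)})n$, as required.

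There is no genuine analytic obstacle here; the only points that require care are the evaluation $\lceil x\rceil=r+1$ at the boundary $x=r+1$ and the verification that the strict lower inequality rules out the degenerate case $d=0$ (where one would instead have $x=0$ and $f(0)=1$). The content of the lemma is really a change of variables: it records that the piecewise-linear bound appearing in the induction hypothesis, indexed by the integer $r$ through the interval containing $d$, coincides on that interval with the single bound $f(\frac{2d}{s(k+1)})n$.
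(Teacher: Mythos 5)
Your proof is correct and takes essentially the same approach as the paper's: both set $x=\frac{2d}{s(k+1)}$, deduce $\lceil x\rceil=r+1$ from the degree hypothesis, and use the identity $f(x)=g(x)$ for $x>0$ (which is Lemma 3.10 in the paper, not Lemma 3.9) to show that the assumed bound $\frac{2}{r+2}\bigl(n-\frac{e}{(r+1)(k+1)}\bigr)$ equals $f(x)n$ exactly. The only cosmetic difference is the direction of the computation --- the paper rewrites the hypothesis' right-hand side into $g(x)n$, while you expand $f(x)n$ into the hypothesis bound --- and your explicit remarks about the boundary case $x=r+1$ and the exclusion of $d=0$ are correct points the paper leaves implicit.
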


\begin{proof}
Set $t=\frac{2d}{s(k+1)}$. Since $\frac{s}{2}r(k+1)< d\leq \frac{s}{2}(r+1)(k+1)$, we have $r<t\leq r+1$. Hence $\lceil t\rceil=r+1$. Thus we have
\begin{eqnarray*}
\alpha_k(H)&\geq& \frac{2}{r+2}(n-\frac{e}{(r+1)(k+1)})\\
&=&\frac{2}{\lceil t\rceil+1}(n-\frac{dn}{s\lceil t\rceil(k+1)}) \ \ \ \ \ (as \ r+1=\lceil t\rceil)\\
&=&\frac{2}{\lceil t\rceil+1}(n-\frac{tn}{2\lceil t\rceil}) \ \ \ \ \ (as \ t=\frac{2d}{s(k+1)})\\
&=&\frac{2n}{\lceil t\rceil+1}(1-\frac{t}{2\lceil t\rceil})\\
&=&n\frac{2\lceil t\rceil-t}{\lceil t\rceil(\lceil t\rceil+1)}=g(t)n=f(t)n \ \ \ \ \ (by \ Lemma \ 3.10)
\end{eqnarray*}
and we are done.
\end{proof}

\begin{lemma}
Let $k\geq 0$ be an integer. If $H$ is an s-uniform hypergraph of order $n$ with $e$ edges, then
\[\alpha_k(H)\geq n-\frac{e}{k+1}.\]
\end{lemma}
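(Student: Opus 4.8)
The plan is to prove the lemma by a greedy vertex-deletion argument that constructs a $k$-independent set while controlling how many vertices are discarded. Starting from the full vertex set $V$, I would run the following process on the current induced subhypergraph: as long as there exists a vertex whose degree in the \emph{current} subhypergraph is at least $k+1$, delete one such vertex together with all edges containing it, and repeat. Since every deletion removes at least one edge and $H$ has only finitely many edges, the process terminates. When it stops, no surviving vertex has degree exceeding $k$, so the remaining set $S$ satisfies $\Delta(H[S])\leq k$; that is, $S$ is a $k$-independent set.

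The key estimate is a charge-counting bound on the number of deleted vertices. At the moment a vertex $v$ is deleted, its degree in the current subhypergraph is at least $k+1$, meaning at least $k+1$ currently-present edges contain $v$; all of these are destroyed by this deletion, and none of them was destroyed at an earlier step, since each edge is removed only once, namely when the first of its vertices is deleted. Hence if $t$ vertices are deleted in total, the number of destroyed edges is at least $t(k+1)$, and since at most $e$ edges can ever be destroyed we obtain $t(k+1)\leq e$, i.e.\ $t\leq \frac{e}{k+1}$. Combining the two observations gives $|S|=n-t\geq n-\frac{e}{k+1}$, and since $S$ is $k$-independent this yields $\alpha_k(H)\geq n-\frac{e}{k+1}$, as required.

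The argument is essentially routine; the only point that needs care is the no-double-counting claim in the edge charge, which must be phrased against the current (shrinking) subhypergraph rather than against $H$ itself, so that the at-least-$(k+1)$ edges destroyed at each step are genuinely new and the inequality $t(k+1)\leq e$ is valid. I expect this to be the single step a careful reader would scrutinize, but it poses no real obstacle. Finally, I would note that the proof uses neither $s$-uniformity nor the value of $s$, so the bound in fact holds for arbitrary hypergraphs; this is consistent with its intended role as the $r=0$ instance of the hypothesis in Lemma 3.12, where $\frac{2}{r+2}\bigl(n-\frac{e}{(r+1)(k+1)}\bigr)$ reduces exactly to $n-\frac{e}{k+1}$.
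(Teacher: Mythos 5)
Your proof is correct and follows essentially the same route as the paper's: both iteratively delete a vertex of current degree at least $k+1$, charge at least $k+1$ distinct edges to each deletion, and conclude that at most $\frac{e}{k+1}$ vertices are removed before the remainder is $k$-independent. Your explicit justification of the no-double-counting step (each edge dies exactly once, at its first deleted vertex) and the remark that $s$-uniformity is never used are both accurate refinements of the paper's argument.
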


\begin{proof}
Set $H_0=H$. If there is a vertex $v_0\in V(H_0)$ such that $deg_{H_0}(v_0)\geq k+1$, then remove it from the hypergraph $H_0$ and call the resulting hypergraph $H_1$, that is, $H_1=H_0-v_0$.

Now, if there is a vertex $v_1\in V(H_1)$ such that $deg_{H_1}(v_1)\geq k+1$, then remove it from the hypergraph $H_1$ and call the resulting hypergraph $H_2$, that is, $H_2=H_1-v_1$. We can repeat this operation iteratively until we get a hypergraph $H_i$ for some $i>0$ such that the maximum degree of $H_i$ satisfies $\Delta(H_i)\leq k$. Notice that $i\leq \lfloor \frac{e}{k+1}\rfloor$, as there are $e$ edges in $H_0$, and in each iteration the number of edges in the resulting hypergraph is decreased by at least $k+1$. Hence if we have reached iteration $t$ for $t=\lfloor \frac{e}{k+1}\rfloor$, then hypergraph $H_t$ will contain at most $e-(k+1) \lfloor\frac{e}{k+1}\rfloor \leq k$ edges, and thus $H_t$ is a $k$-independent set, which means that $i\leq t$. Since $V(H_i)$ is a $k$-independent set in $H$ and $|V(H_i)|=n-i \geq n-\frac{e}{k+1}$, we are done.
\end{proof}

\begin{corollary}
Let $k\geq 0$ be an integer. If $H$ is an $s$-uniform hypergraph of order $n$ with average degree $0<d\leq
\frac{s}{2}(k+1)$, then
\[\alpha_k(H)\geq f(\frac{2d}{s(k+1)})n.\]
\end{corollary}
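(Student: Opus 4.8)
The plan is to apply Lemma 3.12 with the single choice $r = 0$, invoking Lemma 3.13 to discharge its nontrivial hypothesis. First I would observe that the assumption $0 < d \leq \frac{s}{2}(k+1)$ of the corollary is exactly the two-sided bound $\frac{s}{2}r(k+1) < d \leq \frac{s}{2}(r+1)(k+1)$ appearing in Lemma 3.12, specialized to $r = 0$: the left endpoint $\frac{s}{2}\cdot 0\cdot(k+1)$ collapses to $0$, and the right endpoint $\frac{s}{2}(0+1)(k+1)$ equals $\frac{s}{2}(k+1)$. So the corollary is precisely the base interval $(0,\frac{s}{2}(k+1)]$ of the framework set up by that lemma.

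Next I would verify the remaining hypothesis of Lemma 3.12 for $r = 0$, namely the inequality $\alpha_k(H) \geq \frac{2}{r+2}\bigl(n - \frac{e}{(r+1)(k+1)}\bigr)$. Substituting $r = 0$ simplifies the right-hand side to $\frac{2}{2}\bigl(n - \frac{e}{(1)(k+1)}\bigr) = n - \frac{e}{k+1}$, which is exactly the conclusion of Lemma 3.13. Hence this hypothesis holds automatically for every $s$-uniform hypergraph $H$ with $e$ edges, and no further estimation is needed. (Here $e$ and $d$ are linked by $d = \frac{se}{n}$, as required in the statement of Lemma 3.12.)

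With both hypotheses of Lemma 3.12 confirmed for $r = 0$, its conclusion yields $\alpha_k(H) \geq f\bigl(\frac{2d}{s(k+1)}\bigr)n$, which is the desired bound. I do not anticipate any genuine obstacle: the corollary is simply the first interval of the interval-by-interval argument organized by Lemma 3.12, and Lemma 3.13 was evidently tailored to supply precisely the bound that this base case demands, so the proof reduces to matching the two statements and citing them.
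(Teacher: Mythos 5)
Your proposal is correct and matches the paper's own proof exactly: the paper also derives this corollary by setting $r=0$ in Lemma 3.12 and using Lemma 3.13 to supply the hypothesis $\alpha_k(H)\geq n-\frac{e}{k+1}$. Your write-up simply spells out the substitution details that the paper leaves implicit.
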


\begin{proof}
This follows from Lemma 3.13 by setting $r=0$ in Lemma 3.12.
\end{proof}

\begin{lemma}
Let $H$ be an $s$-uniform hypergraph on $n$ vertices with average degree $d(H)\leq d$ and such that $d+2p+1$ does not divide $n$. Then there is an $s$-uniform hypergraph $H'$ such that $d+2p+1$ divides $m=n(H')$, $d(H')=d(H)\leq d$ and $\frac{\alpha_k(H')}{m}=\frac{\alpha_k(H)}{n}$.
\end{lemma}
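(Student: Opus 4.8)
The plan is to build $H'$ by taking disjoint copies of $H$, an operation that preserves both $s$-uniformity and the average degree while scaling the number of vertices and the $k$-independence number by the same factor; this keeps the ratio $\alpha_k/n$ unchanged, which is exactly what the lemma demands. Concretely, I would set $D=d+2p+1$ (a positive integer, since the hypothesis speaks of divisibility) and let $H'$ be the disjoint union of $D$ identical copies $H^{(1)},\dots,H^{(D)}$ of $H$ on pairwise disjoint vertex sets. Then $m=n(H')=Dn$ is divisible by $D=d+2p+1$, settling the divisibility requirement immediately.

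Next I would verify the two easy invariances. Since every edge of $H'$ lies inside a single copy and still has size $s$, the hypergraph $H'$ is again $s$-uniform. For the average degree, the total degree over $V(H')$ equals $D$ times the corresponding sum for $H$, while the vertex count is $Dn$; hence $d(H')=\frac{1}{Dn}\cdot D\sum_{v\in V(H)}\deg(v)=d(H)\leq d$, as required.

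The heart of the argument is the identity $\alpha_k(H')=D\cdot\alpha_k(H)$, which I expect to be the one point needing care. The key observation is that, because the copies share no vertices and no edge of $H'$ meets two distinct copies, for any $S\subseteq V(H')$ the induced subhypergraph $H'[S]$ (in the sense of Definition 2.1) is precisely the disjoint union of the $H^{(i)}[S\cap V(H^{(i)})]$. Therefore $\Delta(H'[S])\leq k$ holds if and only if $\Delta(H^{(i)}[S\cap V(H^{(i)})])\leq k$ for every $i$, i.e. $S$ is $k$-independent in $H'$ exactly when each restriction $S\cap V(H^{(i)})$ is $k$-independent in its copy. Choosing a maximum $k$-independent set of $H$ inside each copy produces a $k$-independent set of size $D\,\alpha_k(H)$, so $\alpha_k(H')\geq D\,\alpha_k(H)$; conversely any $k$-independent $S$ in $H'$ satisfies $|S|=\sum_i|S\cap V(H^{(i)})|\leq D\,\alpha_k(H)$, giving the reverse inequality.

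Finally, dividing by $m=Dn$ yields $\frac{\alpha_k(H')}{m}=\frac{D\,\alpha_k(H)}{Dn}=\frac{\alpha_k(H)}{n}$, the last claimed property, and the lemma follows. The only genuinely nontrivial step is the additivity of $\alpha_k$ over disjoint unions, and the whole weight of that step rests on the fact that the induced-subhypergraph operation of Definition 2.1 decomposes across components whenever no edge crosses between them.
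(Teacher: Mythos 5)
Your proposal is correct and follows exactly the paper's own route: the paper also sets $H'=(d+2p+1)H$, the disjoint union of $d+2p+1$ copies of $H$, and reads off the three properties. The only difference is that you carefully justify the additivity $\alpha_k(H')=(d+2p+1)\alpha_k(H)$ across disjoint copies, a step the paper treats as immediate.
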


\begin{proof}
Let $H'=(d+2p+1)H$. Then $m=n(H')=(d+2p+1)n$ is multiple of $(d+2p+1)$, $d(H')=d(H)$ and $\frac{\alpha_k(H')}{m}=\frac{(d+2p+1)\alpha_k(H)}{(d+2p+1)n}=\frac{\alpha_k(H)}{n}$.
\end{proof}

Finally we are on the spot to give the proof of our main result of this section.

\textbf{Proof of Theorem 3.7:}

Let $k\geq 0$ be an integer. Recall that we need to prove that for any $s$-uniform hypergraph $H$ of order $n$ and average degree $d$, we have
\[\alpha_k(H)\geq f(\frac{2d}{s(k+1)})n.\]
We will prove by induction on integer $r\geq 0$ that for any $s$-uniform hypergraph $H$ of order $n$ and average degree $d \leq \frac{s}{2}(r+1)(k+1)$ we have:
\[\alpha_k(H)\geq f(\frac{2d}{s(k+1)})n.\]
When $r=0$, the result was verified in Corollary 3.14. Assume that the claim holds for $r-1>0$ and we will prove it for $r$.

By Lemma 3.12 and the induction hypothesis, it suffices to prove that if $H$ is an $s$-uniform hypergraph on $n$ vertices, $e$ edges and average degree $d$ satisfying $\frac{s}{2}r(k+1)<d\leq \frac{s}{2}(r+1)(k+1)$, then
\[\alpha_k(H)\geq \frac{2}{r+2}(n-\frac{e}{(r+1)(k+1)}).\]
We can assume that both $n$ and $e$ are divisible by $(r+2)(k+1)$. This is because we can build an $s$-uniform hypergraph $H_0=(r+2)(k+1)H$, namely, $H_0$ is a disjoint union of $(r+2)(k+1)$ copies of $H$, where $d(H)
=d(H_0)$, and the number of vertices $n_0$ and number of edges $e_0$ of $H_0$ are both divisible by $(r+2)(k+1)$.
If $\alpha_k(H')\geq n'f(\frac{2d}{s(k+1)})$ then the original hypergraph $H$ satisfies
\[\alpha_k(H)\frac{n'}{(r+2)(k+1)}f(\frac{2d}{s(k+1)})=nf(\frac{2d}{s(k+1)})\]

We define parameter $t$ as follows:
\[t=\frac{s\cdot e-n\cdot \frac{s}{2}r(k+1)}{\frac{s}{2}(r+2)(k+1)}\]

Because $n$ and $e$ are divisible by $(r+2)(k+1)$, and $d>\frac{s}{2}r(k+1)$, we know that $t$ is an integer and $t>0$.

Set $H_0=H$. If there is a vertex $v_0\in V(H_0)$ such that $deg_{H_0}(v_0)\geq \frac{s}{2}(r+1)(k+1)$, then remove it from the hypergraph $H_0$, and denote the resulting hypergraph by $H_1$, that is, $H_1=H_0-v_0$.

Now if $t>1$ and there is a vertex $v_1\in V(H_1)$ such that $deg_{H_1}(v_1)\geq \frac{s}{2}(r+1)(k+1)$, then remove it from the hypergraph $H_1$ and denote the resulting hypergraph by $H_2$, that is, $H_2=H_1-v_1$.

We repeat this operation iteratively, that is on iteration $i$(starting with $i=0$) we first check if $i=t$ or $\Delta(H_i)<\frac{s}{2}(r+1)(k+1)$, and if one of these conditions holds we terminate the process. Otherwise, we pick a vertex $v_i\in V(H_i)$ such that $deg_{H_i}(v_i)\geq \frac{s}{2}(r+1)(k+1)$ and remove it from the
hypergraph $H_i$. The resulting hypergraph is denoted by $H_{i+1}$, that is, $H_{i+1}=H_i-v_i$.

Suppose that the process above terminated on iteration $j\leq t$, that is, the last hypergraph created in the process is $H_j$. If $j<t$ then $\Delta(H_j)<\frac{s}{2}(r+1)(k+1)$, and thus by Theorem 3.4, we have $\alpha_k(H_j)\geq \frac{n-t}{r+1}$. Now the resting is to prove that when $j=t$ we also have $\alpha_k(H_j)\geq \frac{n-t}{r+1}$.

First we notice that
\begin{eqnarray*}
n-t&=&n-\frac{s\cdot e-n\cdot \frac{s}{2}\cdot r(k+1)}{\frac{s}{2}(r+2)(k+1)}\\
&=&\frac{(r+2)(k+1)n+nr(k+1)-2e}{(r+2)(k+1)}\\
&=&\frac{2[n(r+1)(k+1)-e]}{(r+2)(k+1)} \ \ \ \ \ \ \ \ \ \ \ \ \ \ \ \ \ \ \ \ \ \ \ \ \ \ \ \ \ \ \ \ \ \ \ \ \ \ \ \ \ \ \ \ \ \ \ (3.1)
\end{eqnarray*}
Now we claim that $d(H_t)\leq \frac{s}{2}r(k+1)$. Notice that as in each iteration at least $\frac{s}{2}(r+1)(k+1)$ edges were removed we have that $e(H_t)$, the number of edges in hypergraph $H_t$, satisfies
\begin{eqnarray*}
e(H_t)&\leq &e-t\cdot\frac{s}{2}(r+1)(k+1)\\
&=&e-\frac{2e-nr(k+1)}{(r+2)(k+1)}\cdot \frac{s}{2}(r+1)(k+1)\\
&=&\frac{(r+2)(k+1)e-\frac{s}{2}(r+1)(k+1)[2e-nr(k+1)]}{(r+2)(k+1)}
\end{eqnarray*}
Since $\frac{e(H_t)}{n-t}\leq \frac{1}{2}r(k+1)$, it follows that
\[d(H_t)=\frac{s\cdot e(H_t)}{n-t}=s\cdot \frac{e(H_t)}{n-t}\leq \frac{s}{2}r(k+1).\]
Now since $d(H_t)\leq \frac{s}{2}r(k+1)$, we can apply the induction hypothesis on $H_t$. By the induction hypothesis and Lemma 3.6, we have
\[\alpha_k(H_t)\geq \frac{n-t}{r+1}.\]
We conclude that
\begin{eqnarray*}
\alpha_k(H)&\geq &\alpha_k(H_j)\\
&\geq &\frac{n-t}{r+1}\\
&=&\frac{2}{r+2}(n-\frac{e}{(r+1)(k+1)}) \ \ \ \ (by \ Equation \ 3.1)\\
&\geq &f(\frac{2d}{s(k+1)})n \ \ \ \ \ (by \ Lemma \ 3.12)
\end{eqnarray*}
and this is exactly what we need to prove. This concludes the proof of the induction.

\textbf{Remark.}

For the above two lower bounds of the $k$-independence number of $s$-uniform hypergraphs, as showing Theorem 3.4 and 3.7, respectively. Since
$$s\geq 2, \Delta \geq d,$$
So when $\Delta>k(k+1)$, we have
\[\lceil \frac{\Delta}{k}\rceil>\frac{\Delta}{k} \geq \frac{2d}{s(k+1)}+1.\]
Hence,
\[\frac{n}{\lceil \frac{\Delta}{k}\rceil} < \frac{s(k+1)}{s(k+1)+2d}n\]
Therefore,
we find that when $\Delta>k(k+1)$, the bound in (2) is better than the bound in (1).

\vskip 3mm
\end{spacing}

\begin{thebibliography}{99}

\bibitem{Berge1973} C. Berge, Hypergraph: Combinatorics of Finite Sets, third edition, North-Holland, Amsterdam, 1973.

\bibitem{Bretto2013} A. Bretto, Hypergraph Theory: An Introduction, Springer, 2013.

\bibitem{Kogan2017} Shimon Kogan, New results on $k$-independence of graphs, the electronic journal of combinatorics 24(2) (2017), 2-15.

\bibitem{Caro1991} Y. Caro and Z. Tuza, Improved lower bounds on $k$-independence, J Graph Theory 15 (1991), 99-107.

\bibitem{Caro1979} Y. Caro, New results on the independence number, Tech. Report, Tel-Aviv University (1979).

\bibitem{Spencer1972} Joel Spencer, Tur\'{a}n¡¯s theorem for $k$-graphs, Discrete Math. 2(2) (1972), 183-186.

\bibitem{Favaron1988} O. Favaron. $k$-domination and $k$-independence in graphs. Ars Combin. 25(C) (1988), 159-167.

\bibitem{Bojilov2013} Asen Bojilov, Yair Caro, Adriana Hansberg, and Nedyalko Nenov. Partitions of graphs into small and large sets. Discrete Applied Mathematics, 161(13-14) (2013) 1912-1924.

\bibitem{Thiele1999} Torsten Thiele, A lower bound on the independence number of arbitrary hypergraphs, J. Graph Theory, 30(3) (1999), 213-221.

\bibitem{Mubayi2011} Kunal Dutta, Dhruv Mubayi and C.R. Subramanian, New lower bounds for the independence number of sparse graphs and hypergraphs, (2011), to appear.

\bibitem{Thomas2012} B\'{e}la Csaba, Thomas A. Plick, Ali Shokoufandeh, A note on the Caro-Tuza bound on the independence number of uniform hypergraphs, Australasian Journal of Combinatorics, 52 (2012), 235-242.

\bibitem{Caro2013} Yair Caro, Adriana Hansberg, New approach to the $k$-independence number of a graph, the electronic journal of combinatorics 20(1) (2013), 33.

\bibitem{L1968} L. Lov\'{a}sz, On chromatic number of finite set-systems, Acta Math. Ac. Sci. Hung. 19(1-2), (1968), 59-67.

\end{thebibliography}
\end{document}